\newfont{\aj}{eufm10 at12pt}
\newfont{\ajk}{eufm10 at10pt}
\theoremstyle{plain}
\newtheorem{them}{Theorem}[section]
 \newtheorem{lem}[them]{Lemma}
 \newtheorem{exam}[them]{Example}
 \theoremstyle{definition}
 \newtheorem{df}[them]{Definition}
 \theoremstyle{remark}
 \newtheorem{rem}[them]{Remark}
 \numberwithin{equation}{section}
\begin{document}


\vspace{1.3 cm}

\title{On the notion of fuzzy shadowing property}
\author{Mehdi Fatehi Nia
\\
\small{ Department of Mathematics, Yazd University, 89195-741 Yazd, Iran.}
\\ \small{ Tel:+98 351 38122716}
\\
\small{E-mail:fatehiniam@yazd.ac.ir}}

\thanks{{\scriptsize
\hskip -0.4 true cm MSC(2010): Primary: 37C50; Secondary: 37C15,
11Y50.
\newline Keywords: Fuzzy metric, Fuzzy discrete dynamical systems, Fuzzy shadowing, Fuzzy ergodic shadowing, Fuzzy topological mixing.\\
}
}

\maketitle
\begin{abstract}
This paper is concerned with the study of fuzzy dynamical systems. Let $(X,M,\ast)$ be a fuzzy metric space in the sense of George and Veeramani. A fuzzy discrete dynamical system is given by any fuzzy continuous self-map defined on $X$. We introduce the various fuzzy shadowing and fuzzy topological transitivity on a fuzzy discrete dynamical systems. Some relations between this notions have been proved.
\end{abstract}

\vskip 0.2 true cm


\pagestyle{myheadings}
\markboth{\rightline {\scriptsize  MEHDI FATEHI NIA}}
         {\leftline{\scriptsize On the notion of fuzzy shadowing property}}

\bigskip
\bigskip


\section{Introduction} 
\emph{Fuzzy topology }is an extension of the ordinary topology which has become an area of active research in the recent
years, because of its numerous applications \cite{S}.
George and Veeramani (see \cite{Na,Nb}) modified and studied a notion of fuzzy metric $M$ on a set $X$ via of continuous $t-$norms  which introduced by Kramosil and Michalek \cite{KM}.  From now on, when we talk about fuzzy metrics we refer to this type of fuzzy metric spaces. \\George and
Veeramani proved that $M$ induces a topology on $X$. This topology is not the same as the fuzzy topology. Actually, this topology also can be constructed on each fuzzy metric space in the sense of Kramosil and Michalek \cite{GR,KM,M}. We should remark that the topology on $X$ deduced from a fuzzy
metric $M$, does not depend on the t-norm \cite{Nb}.\\
The shadowing property and chain mixing are two of the most important concepts in discrete dynamical systems \cite{B}, which are closely
related to stability and chaotic behavior of dynamical systems, see, for instance \cite{R,W,YA,YB} and they are essential parts of stability and ergodic theory \cite{B}. In this direction the concept of ergodic shadowing property, topological mixing and chain mixing are  another formal
ways to get some special dynamical properties, for discrete dynamical system $(X, f)$, where $f$ is an onto continuous map (see \cite{CL, CKY, KP, RW, KS,W,YA}).\\
 The study of discrete fuzzy dynamical systems has been done by many authors and different
properties of fuzzy discrete dynamical systems were considered. For example, study of some chaotic properties of the discrete fuzzy dynamical system  has been done by some authors  \cite{Ka,Kc,Kb,RC}.
\\
Nearness depends on the time and we have not introduced any distance as a metric. In fact, fuzzy metric determines the rate of nearness without introducing a distance number, this is the main idea of the fuzzy metric spaces. This theory implies that \emph{the rate of nearness is more important than the notion of distance.} For this reason we use of the word fuzzy, since there is not any distance in fuzzy metric space theory. In this paper we have extend the notion of shadowing by using of the rate of nearness instead of distance. In coding theory distance is not reachable, but we can determine the rate of nearness. Since shadowing is one of the important property in the theory of discrete dynamical
systems as it is close to the stability of the system and also to the chaotic
behavior of the systems \cite{R,W,YA,YB}, this extension can be a useful tool to study stability and chaos in fuzzy discrete dynamical systems.
\\
The present paper is organized as follows: In Section $2$, several basic definitions and notations are presented.
In Section $3$, we consider the concept of fuzzy discrete dynamical systems which is a generalization of discrete dynamical systems \cite{K,Z}. We use \textbf{$F-$....} instead of Fuzzy....\\
The notions of $\delta-F$-pseudo-orbit, $F-$ shadowing property,
$F-$topological mixing, $F-$chain transitive, $F$-ergodic shadowing property, have been
developed and presented. Some propositions
on the above notions have also been established. Actually, we show that the following statements hold:\\
$1)$ Any mapping with $F$-ergodic shadowing property is $F-$chain transitive.\\
$2)$ If $f$ has the $F-$ergodic shadowing property then for any natural number $k$ the mapping $f^{k}$ has also the $F-$ergodic shadowing property.\\
$3)$ $F-$ergodic shadowing property implies $F-$shadowing property.\\
$4)$ If $f$ has the $F$-shadowing and $F$-chain mixing properties then it has the $F-$topologically mixing property.\\In Section $4$ we have introduced few examples to illustrate the definitions above.
A tent map is one of the most popular and the simplest chaotic maps \cite{CKY}. In Example \ref{e6} and Remark \ref{rem2}, we consider the family of tent maps and show that they have the\\ $F$-shadowing and $F$-chain mixing properties. Also, in Example \ref{e7} we show that the classical shadowing property is a special case of fuzzy
shadowing property and they are essentially different.\\ As usual
the paper finishes  some concluding remarks and open problems (Section $5$).
\section{Preliminaries}This section contains two major subsections: Fuzzy Metric Space and Discrete Dynamical Systems. We start this section by recalling some pertinent concepts.\\
\textbf{Fuzzy Metric Space}
 \\ \indent Let $\ast $ be a continuous \emph{t-norm}, i.e., it is a binary operation from \\$[0,1]\times
[0,1]$ to $[0,1]$ with the following conditions:\\
$i)  ~\ast $ is
associative and commutative;\\ $ii)~ \ast$ is continuous;\\
$iii) ~a\ast1=a$ for all $a \in [0,1];$ \\ $iv)~a \ast b \leq c
\ast d$ whenever $a \leq c$ , $b \leq d$ and~ $a,b,c,d \in [0,1].$
\begin{lem}\label{l1} \cite{Na}. For any $r_{1}>r_{2}$, we can find $r_{3}$ such that\\ $r_{1}\ast r_{3}\geq r_{2}$ and for any $r_{4}$ we can find $r_{5}$ such that \\$r_{5}\ast r_{5}\geq r_{4}$, $(r_{1},r_{2},r_{3},r_{4},r_{5}\in (0,1)).$\end{lem}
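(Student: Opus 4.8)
The plan is to treat the two assertions separately, in each case reducing to a one-variable continuity argument that exploits the boundary identity $a \ast 1 = a$ from axiom (iii). For the first assertion, I would fix $r_1 > r_2$ in $(0,1)$ and introduce the function $g(t) = r_1 \ast t$ on $[0,1]$. Because $\ast$ is continuous (axiom (ii)), $g$ is continuous, and the identity $g(1) = r_1 \ast 1 = r_1$ shows $g(1) = r_1 > r_2$. The strict inequality at the right endpoint then lets me invoke continuity of $g$ at $t=1$ to find a left-neighborhood of $1$ on which $g$ stays above $r_2$; any point $r_3$ of that neighborhood lying in $(0,1)$ satisfies $r_1 \ast r_3 = g(r_3) \geq r_2$, as required.

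For the second assertion I would proceed identically with the diagonal function $h(t) = t \ast t$, which is again continuous since $\ast$ is. Here $h(1) = 1 \ast 1 = 1 > r_4$ for every $r_4 \in (0,1)$, so continuity of $h$ at $t = 1$ again supplies some $r_5$ near $1$, with $r_5 \in (0,1)$, for which $h(r_5) = r_5 \ast r_5 > r_4$. In both cases the monotonicity axiom (iv) is not even strictly needed; only continuity and the value of $\ast$ at $1$ are used.

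I expect the only genuinely delicate point to be confirming that $r_3$ and $r_5$ can always be chosen strictly inside $(0,1)$, as the statement demands, rather than being forced onto the boundary. This causes no trouble precisely because the inequalities $g(1) > r_2$ and $h(1) > r_4$ are strict: a strict inequality at an endpoint persists on a nondegenerate interval, leaving ample room to pick the witness below $1$. One could alternatively phrase each step through the intermediate value theorem after also computing the opposite endpoints $g(0) = r_1 \ast 0 = 0$ and $h(0) = 0 \ast 0 = 0$, but since the conclusion only requires the weak inequality ``$\geq$'', the direct argument at the right endpoint is shorter and sidesteps the auxiliary verification that $a \ast 0 = 0$.
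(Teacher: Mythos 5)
Your proof is correct, and it is essentially the standard argument: the paper itself gives no proof of this lemma (it is quoted from the cited reference of George and Veeramani), and the proof there is exactly your continuity-at-the-right-endpoint argument using $a \ast 1 = a$ together with continuity of $\ast$. Your observations that monotonicity is not needed and that the strictness of $g(1) > r_2$ and $h(1) > r_4$ guarantees a witness strictly inside $(0,1)$ are both accurate.
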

 We also assume that $(X,~M,~\ast)$ is a \emph{fuzzy metric space} \cite{Na,Nb} i.e., $X$ is a nonempty set, $\ast$ is a
continuous t-norm and\\ $M:X \times X \times
(0,\infty)\longrightarrow [0,1]$ is a mapping with the
following properties:\\ \indent For every $x,y,z \in X$ and $t,s > 0$\\
$1)~M(x,y,t)>0$;\\ $2)~M(x,y,t)=1$ if and only if $x=y;$ \\
$3)~M(x,y,t)=M(y,x,t); $ \\ $4)~M(x,z,t+s)\geq M(x,y,t)\ast
M(y,z,s);$ \\ $5)~M(x,y,.):(0,\infty)\longrightarrow \textbf{[}0,~1]$ is a
continuous map.\\
 If $(X,M,\ast)$ is a fuzzy metric space, we will say that $M$ is
a fuzzy metric on $X.$ The function $M(x,y,t)$ denotes the degree of nearness between $x$ and $y$ respect to $t$. Many interesting examples of fuzzy metric spaces
can be found in \cite{GMS}.
\begin{lem} \cite{MG}. $M(x,y,.):(0,\infty)\longrightarrow (0,1]$ is nondecreasing for all $x,y$ in $X.$
 \end{lem}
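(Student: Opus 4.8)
The plan is to derive monotonicity directly from the triangle inequality (axiom 4) by exploiting the degenerate case in which both endpoints coincide. Fix $x,y \in X$ and take any $0 < t < t'$; writing $s = t' - t > 0$, I would apply axiom 4 with the terminal point $z$ and the intermediate point both taken to be $y$, so that the time $t' = t + s$ splits as
$$M(x,y,t+s) \geq M(x,y,t) \ast M(y,y,s).$$

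The next step is to observe that $M(y,y,s) = 1$ by axiom 2 (since $M(\cdot,\cdot,s)$ equals $1$ precisely when the two points agree), and then to invoke the t-norm identity $a \ast 1 = a$ from property (iii) to simplify the right-hand side. This yields
$$M(x,y,t+s) \geq M(x,y,t) \ast 1 = M(x,y,t),$$
that is, $M(x,y,t') \geq M(x,y,t)$. Since $x,y$ and $t < t'$ were arbitrary, this shows that $t \mapsto M(x,y,t)$ is nondecreasing on $(0,\infty)$.

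I do not expect a substantial obstacle: the statement is essentially a one-line consequence of the axioms once the correct intermediate point is chosen. The only step demanding any care is the decision to place $y$ itself (rather than some unrelated third point) in the middle of the triangle inequality, which is exactly what forces the factor $M(y,y,s)$ to equal $1$ and lets property (iii) collapse the t-norm product. Note that neither the continuity axiom (5) nor Lemma \ref{l1} is required here; the monotonicity is purely algebraic, resting only on the triangle inequality, the coincidence axiom, and the unit behaviour of the continuous t-norm.
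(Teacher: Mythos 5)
Your proof is correct: setting both the terminal and intermediate points equal to $y$ in axiom (4), then using $M(y,y,s)=1$ (axiom 2) and $a\ast 1=a$ (t-norm property (iii)), gives $M(x,y,t+s)\geq M(x,y,t)$ exactly as required. The paper states this lemma without proof, citing Grabiec \cite{MG}, and your argument is precisely the standard one found there, so there is nothing to add.
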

\begin{df} \cite{Na}Let
$(X,M,\ast)$ be a fuzzy metric space. A set \\$A\subset X$ is
called a \emph{fuzzy open set} if for any $x\in A$ there exists $0<r<1$ and
$T_{0}\in (0,\infty)$ such that if $M(x, y, t)>1-r$, for all $t>T_{0}$ then $y\in A$.
 \end{df} Since $ M(x, y, t)\geq M(x, y,
T_{0})$ for all $t>T_{0},$ then in the above definition it is
sufficient that for one $T_{0}\in (0,~\infty)$ if \\$M(x, y,
T_{0})>1-r$ then $y\in A.$
\begin{df} \cite{Nb}. Let $(X, M, \ast)$ be a
fuzzy metric space. For $t>0$, the \emph{open ball} $B(x,r,t)$ with center
$x\in X$ and radius \\$0<r<1$ is defined by: $$B(x, r, t)= \{y \in X;
M(x, y, t)> 1-r\}.$$
Similarly, for $t>0$, the\emph{ closed ball} with the center $x \in X$ and
radius $ 0<r<1$ is defined by: $$B[x, r, t]= \{y \in X; M(x, y, t)\geq
1-r\}.$$
 \end{df}
 In \cite{MG} it has been proved that every fuzzy metric $M$ on $X$ generates a
Hausdorff first countable topology $\tau_{M}$ on $X$ which has a base the family of open sets of the form $$\{B(x, r, t): x\in X, r\in (0,1),~ t>0\}.$$
Functions M(x,y,t) presented in Examples from \ref{e1} to \ref{e3} will be utilized in the Section 4.\begin{exam} \cite{Nb} \label{e1}Let $(X, d)$ be a
metric space. Denote by $a.b$ the usual multiplication for all $a, b\in[0,1]$, and let $M_{d}$ be the function defined on $X\times X\times(0,\infty)$ by $M_{d}(x, y, t)= \frac{t}{t+ d(x, y)}$. Then $(X, M_{d}, ~.)$ is a fuzzy metric space called \emph{standard fuzzy metric space}, and $(M_{d}, ~.)$ will be called the standard fuzzy metric of $d$.
And the topology $\tau_{M_{d}}$ generated by $d$ coincides with the
topology  $\tau_{M_{d}}$ generated by the fuzzy metric $M_{d}$. \end{exam}
\begin{exam}\cite{GMS} \label{e2} Let $X=(0,1]$ and let $\varphi:\mathbb{R}^{+}\rightarrow (0,1]$ be a function given by $\varphi(t)=t$ if $t\leq 1$ and $\varphi(t)=1$ elsewhere. Define the function $M$ on $X\times X\times(0,\infty)$ by: \[ M(x, y, t) = \left\lbrace
  \begin{array}{c l}
  1& \text{if ~$~ x=y$},\\
    \frac{min\{x, y\}}{max\{x, y\}}.\varphi(t) & \text{if ~$~x\neq y$}.
  \end{array}
\right. \]
It is easy to verify that $(X, M,~.)$ is a fuzzy metric on $X$, where $a.b=ab$ for all $a, b\in X$.\\
Notice that $B(x,\frac{1}{2}, \frac{1}{2})= {x}$ for each $x\in ~X$
and so $\tau_{M}$ is the discrete topology.\end{exam}
\begin{exam}\textbf{\cite{GMS}} \label{e3} Let $X=(0,~1]$ and define the function $M_{1}$ on \\$X\times X\times(0, \infty)$ by: \[ M_{1}(x, y, t) = \left\lbrace
  \begin{array}{c l}
  1& \text{if ~$~ x=y$},\\
    \frac{min\{x,y\}}{max\{x, y\}} & \text{if ~$~x\neq y$}.
  \end{array}
\right. \]
 It is easy to verify that $(X, M_{1},~.)$ is a fuzzy metric on $X$, where \\$a.b=ab$ for all $a,b\in X$. Let $\epsilon>0$ and $a\in X$.
This is clear that \\$B(a, \epsilon, t)\subset B_{\epsilon}(a)$ and $B_{\epsilon}(a\epsilon)\subset B(a, \epsilon, t)$,
for all $t\in  \mathbb{R}^{+}$. So $\tau_{M_{1}}$ is the usual topology of $\mathbb{R}$.\end{exam}
\begin{them} \cite{Nb}. Let $(X, M, \ast)$ be a fuzzy metric space and $\tau_{M}$ be the topology
induced by the fuzzy metric. Then for a sequence $\{x_{n}\}_{n=1}^{\infty}$ in
$X,~lim_{n\rightarrow \infty}x_{n}= x$ if and only if $lim_{n\rightarrow \infty}M(x_{n}, x, t)=1$ for all $t>0$.  \end{them}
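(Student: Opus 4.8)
The plan is to rephrase the topological statement ``$x_n \to x$ in $\tau_M$'' in the language of open balls, since $\tau_M$ has the balls $B(x,r,t)$ as a base. Recall that in any topological space a sequence converges to $x$ exactly when it is eventually contained in every open neighbourhood of $x$; so the whole argument reduces to understanding the open neighbourhoods of $x$ through the balls centred at $x$. I would prove the two implications separately, treating the forward one as a direct unwinding of definitions and the converse as the place where the metric axioms do real work.

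For the forward implication, assume $x_n \to x$ and fix $t>0$ and $\varepsilon \in (0,1)$. The ball $B(x,\varepsilon,t)$ is an open set containing $x$, so by convergence there is $N$ with $x_n \in B(x,\varepsilon,t)$ for all $n \ge N$; by definition this reads $M(x_n,x,t) > 1-\varepsilon$. Combined with $M(x_n,x,t) \le 1$ this gives $|M(x_n,x,t)-1| < \varepsilon$ for $n \ge N$, and since $t$ and $\varepsilon$ were arbitrary we conclude $\lim_{n} M(x_n,x,t) = 1$ for every $t>0$. This direction needs nothing beyond the definition of the balls.

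For the converse, assume $\lim_n M(x_n,x,t)=1$ for all $t>0$ and let $U$ be any open set with $x \in U$. Because the balls form a base, there are $y \in X$, $s \in (0,1)$ and $u>0$ with $x \in B(y,s,u) \subseteq U$. The key step is to insert a ball centred at $x$: I claim one can find $r \in (0,1)$ and $\tau>0$ with $B(x,r,\tau) \subseteq B(y,s,u)$. To produce it, use the continuity of $M(y,x,\cdot)$ (property $(5)$) to pick $\tau \in (0,u)$ with $M(y,x,u-\tau) > 1-s$, set $r_0 = M(y,x,u-\tau) > 1-s$, and apply Lemma \ref{l1} to find $r_3$ with $r_0 \ast r_3 \ge 1-s$; then put $r = 1-r_3$. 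For $z \in B(x,r,\tau)$ the triangle inequality (property $(4)$), together with symmetry (property $(3)$), gives $M(y,z,u) \ge M(y,x,u-\tau)\ast M(x,z,\tau) \ge r_0 \ast r_3 \ge 1-s$, which places $z$ in the closed ball $B[y,s,u]$; a routine strict-inequality adjustment then yields $z \in B(y,s,u)$. Once this inclusion is established, the hypothesis $M(x_n,x,\tau)\to 1$ produces $N$ with $M(x_n,x,\tau)>1-r$ for $n\ge N$, i.e.\ $x_n \in B(x,r,\tau) \subseteq B(y,s,u) \subseteq U$, which is exactly $x_n \to x$.

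The main obstacle is this insertion step in the converse: showing that the balls centred at $x$ are cofinal among the neighbourhoods of $x$ is where the triangle inequality, the continuity of $M$ in the time variable, and the $t$-norm estimate of Lemma \ref{l1} all enter simultaneously. The one technical wrinkle is keeping the inequalities strict, so that $z$ lands inside the open ball $B(y,s,u)$ rather than only in its closure; this is handled either by first shrinking $s$ slightly before applying Lemma \ref{l1}, or by exploiting that $M(x,z,\tau) > 1-r$ holds strictly and invoking the continuity of $\ast$. Everything outside this step is routine bookkeeping with the definitions of convergence and of the balls.
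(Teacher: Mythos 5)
Your proof is correct, but there is nothing in the paper to compare it against: the theorem is stated with a citation to \cite{Nb} (George and Veeramani) and no proof is given in the paper itself. Your argument is essentially the standard one from that source. The forward direction is, as you say, pure definition-unwinding (using that each ball $B(x,\varepsilon,t)$ is open, which the paper grants by declaring the balls a base of open sets). The converse is carried by your insertion step, showing the balls centred at $x$ are a neighbourhood base of $x$; the ingredients you use --- continuity of $M(y,x,\cdot)$ in the time variable, the triangle inequality, and Lemma \ref{l1} --- are exactly those used in \cite{Nb} to prove that open balls are open, so the two routes coincide.

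One caution on your strictness fixes: the first one (shrink $s$ to some $s'$ with $1-s<1-s'<M(y,x,u)$ \emph{before} invoking Lemma \ref{l1}, so that the chain ends with $M(y,z,u)\geq 1-s'>1-s$) is the right move and is precisely what the reference does. The alternative you float --- relying on strictness of $M(x,z,\tau)>1-r$ plus ``continuity of $\ast$'' --- is not sound on its own: a continuous $t$-norm need not be strictly monotone (ordinal sums have flat regions, and already $\min$ is not strictly increasing), so a strict inequality in one argument of $\ast$ does not in general survive to a strict inequality in the output. Since you present the two fixes as alternatives and the first one is valid, the proof stands; just discard the second option.
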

\indent A sequence $\{x_{n}\}_{n=1}^{\infty}$ in a fuzzy metric space $(X, M, \ast)$ is a Cauchy sequence if and
only if for each $\epsilon, t>0$ there exists $n_{0} \in \mathbb{N}$ such
that $M(x_{n}, x_{m}, t)>1-\epsilon$ for all $n, m\geq
n_{0}.$ A fuzzy metric space in which every
Cauchy sequence is convergent is called a complete fuzzy metric
space.
\begin{df} \textbf{\cite{Na}}Let $(X, M, \ast)$ be a fuzzy metric space
and \\$f :X \longrightarrow ~X$  be a fuzzy map. Then $f$ is said to
be a \emph{fuzzy continuous} map at the point $x_{0}$, if for any
$\epsilon\in(0,1)$ and $t>0$ there exists $\delta\in(0,1)$ and \\$t^{'}>0$ such that for each
$x$ with $M(x, x_{0}, t^{'})>1-\delta$. So\\
$M(f(x), f(x_{0}), t)>1-\epsilon$. \end{df}  $f$ is said to be a
fuzzy
continuous map if $f$ is fuzzy continuous at any point of $X$.
\subsection*{Discrete Dynamical Systems}
\indent Here, we recall some notions and results of discrete dynamical systems which we require in this paper \cite{CKY,FG}.\\ \indent Let $(X, d)$ be a compact metric space and $f: X \longrightarrow X$ be a continuous map. For any two open subsets $U$ and $V$ of $X$, denote
\\$N(U, V , f ) = \{m \in \mathbb{N}; f^{m}(U) \cap V \neq \emptyset \}.$
When there is no ambiguity, we denote it by $N(U, V )$. We say that $f$ is \emph{topologically transitive} if for every pair of open sets
$U$ and $V$ in $X$, $N(U, V ) \neq \emptyset$. \emph{Topological mixing} means that for
any two open subsets $U$ and $V$, the set $N(U, V )$ contains any natural number $n \geq n_{0}$, for some fixed $n_{0}\in \mathbb{N}.$\\ \indent
Given a number $\delta > 0$, a $\delta$\emph{-pseudo orbit } for $f$ is a sequence $\{x_{i}\}_{1\leq i\leq b }$ such that  $~d( f (x_{i} ), x_{i+1}) < \delta$, for every $1 \leq i \leq b$. If $b <\infty$, then we say that the finite $\delta$-pseudo orbit $\{x_{i}\}_{1\leq i\leq b}$ of $f$ is a \emph{$\delta$-chain} of $f$ from $x_{1}$ to $x_{b}$ of length $b$. A point $x\in X$ is called a \emph{chain recurrent point} of $f$ if for every $\delta >0$, there is a $\delta$-chain from $x$ to $x$. Denote by $CR( f )$ the set of all chain recurrent points of $f$. A sequence $\{x_{i}\}_{1\leq i\leq b}$ is said to be \emph{$\epsilon$-shadowed} by a point $x$ in $X$ if $d( f^{i} (x), x_{i}) < \epsilon$ for each $1 \leq i\leq b$. A mapping $f$ is said to have \emph{shadowing property} if for any $\delta> 0$, there is a $\epsilon >0$ such that every $\delta$-pseudo orbit of $f$ can be $\epsilon$-shadowed by some point in $X$.\\
\indent A mapping $f$ is called \emph{chain transitive} if for any two points \\$x, y \in X$ and any $\delta >0$ there exists a $\delta$-chain from $x$ to $y$. The
mapping $f$ is called chain mixing if for any two points $x, y \in X$ and any $\delta >0$, there is a positive integer $n_{0}$ such that for any
integer $n\geq n_{0}$ there is a $\delta$-chain from $x$ to $y$ of length $n$.\\
 Given a sequence $\eta = \{x_{i}\}_{i\in \mathbb{N}}$, denote $$Npo(\eta, \delta) = \{i|d( f (x_{i}), x_{i+1})\geq \delta\}$$ and $$Npo_{n}(\eta, \delta) = Npo(\eta, \delta) \cap \{1, . . . ,n - 1\}.$$ For a
sequence $\eta$ and a point $x$ of $X$, denote $$Ns(\eta, x, \delta) = \{i|d( f ^{i}(x), x_{i})\geq \delta\}$$ and $$Ns_{n}(\eta, x, \delta) = Ns(\eta, x, \delta) \cap\{1, . . . , n-1\}.$$
\begin{df} \cite{FG}. A sequence
$\eta$ is a \emph{$\delta$-ergodic pseudo orbit} if  $Npo(\eta, \delta)$ has density zero, it
means that $lim_{n\rightarrow \infty}\frac{card(Npo_{n}(\eta,~ \delta))}{n}= 0$,
where $card(.)$ denotes the cardinal number. A $\delta$-ergodic pseudo orbit $\eta$ is said to be \emph{$\epsilon$-ergodic shadowed} by a point $x$ in $X$ if
\\$lim_{n\rightarrow \infty}
\frac{card(Ns_{n}(\eta, x, \epsilon))}{n}= 0$.\end{df}
In other words, $Ns(\eta, x, \epsilon)$, has density zero. A mapping
$f$ has ergodic shadowing property if for any $\epsilon > 0 $ there is a $\delta >0$ such that any $\delta$-ergodic pseudo orbit of f can be $\epsilon$-ergodic
shadowed by some point in $X$.
\\The shadowing property and chain recurrence is one of the most important concepts in dynamical systems \cite{B}, which are closely
related to stability and chaos of systems, see, for instance \cite{R,W,YA} and are essential parts of stability and ergodic theory \cite{B}.
 \\In \cite{FG} Fakhari and Ghane  prove the following theorem:
\begin{them}\textbf{\cite{FG}}\label{gha} Let $f$ be a continuous onto map of a compact metric space $X$. For the dynamical system $(X, f )$, the following properties
are equivalent:\\
(a) ergodic shadowing,\\
(b) shadowing and chain mixing,\\
(c) shadowing and topological mixing.
We are going to study other concepts and their relations in another papers.
\end{them}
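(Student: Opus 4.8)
The plan is to prove the cycle of implications by first observing that, in the presence of shadowing, the two mixing hypotheses in $(b)$ and $(c)$ are interchangeable, and then treating the two genuinely dynamical implications $(a)\Rightarrow(b)$ and $(c)\Rightarrow(a)$ separately. For $(b)\Leftrightarrow(c)$: topological mixing trivially yields chain mixing, since a genuine orbit segment is in particular a $\delta$-chain. For the converse, I assume shadowing and chain mixing and fix nonempty open sets $U,V$; I choose $p\in U$, $q\in V$ and $r>0$ with $B(p,r)\subseteq U$, $B(q,r)\subseteq V$, take the $\delta$ that shadowing assigns to $r$, and use chain mixing to obtain, for every large $n$, a $\delta$-chain of length $n$ from $p$ to $q$. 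Shadowing this chain by a point $w$ places $w$ in $U$ and $f^{n}(w)$ in $V$, so $n\in N(U,V)$ for all large $n$, which is topological mixing.

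For $(a)\Rightarrow(b)$ I would proceed in three steps. First, ergodic shadowing forces chain transitivity: feeding the property an ergodic pseudo-orbit that alternates ever-longer genuine orbit blocks of $x$ and of $y$ (so the transition set has density zero while each phase retains positive density) yields a single point $z$ whose forward orbit is $\epsilon$-close to the orbit of $x$ and, later, to the orbit of $y$ on positive-density time sets; splicing a genuine arc of $z$ between a near-$x$ time and a near-$y$ time produces the desired $\delta$-chain. Second, and this is the cleanest step, ergodic shadowing implies ordinary shadowing: given a finite $\delta$-pseudo-orbit of length $k$, I close it into a $\delta$-loop with a chain from its last point back to its first (available by chain transitivity), repeat the loop periodically with period $T$ to obtain a genuine, hence $\delta$-ergodic, pseudo-orbit, and apply ergodic shadowing to get a point $z$ mis-shadowing only a density-zero set of times; since the copies of the original block sit on a set of positive density $(k{+}1)/T$, at least one whole period must avoid the density-zero bad set, and the corresponding iterate of $z$ then $\epsilon$-shadows the block. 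Third, chain mixing follows by applying the chain-transitivity step to every iterate $f^{k}$ (ergodic shadowing passes to powers) together with the standard fact that a map is chain mixing precisely when all of its iterates are chain transitive.

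For $(c)\Rightarrow(a)$, the technical heart, I fix $\epsilon>0$, let $\delta$ be the shadowing constant for $\epsilon$, and let $\eta=\{x_i\}$ be a $\delta$-ergodic pseudo-orbit, so its bad-transition set $B=Npo(\eta,\delta)$ has density zero. The key device is to repair $\eta$ into a genuine $\delta$-pseudo-orbit on the same time axis by \emph{overwriting}, not inserting: after each bad index I replace a window of bounded length $M_{0}$ by a connecting $\delta$-chain joining the surviving endpoints. Topological mixing together with compactness and uniform continuity supplies a single $M_{0}$ for which any two points can be joined by a $\delta$-chain of every length $\ge M_{0}$, so the windows have uniformly bounded length. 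The thickened bad set $\bigcup_{i\in B}[i,i+M_{0}]$ still has density zero, the repaired sequence $\eta'$ agrees with $\eta$ off this set, and a single shadowing point $w$ of $\eta'$ therefore $\epsilon$-shadows $\eta$ off a density-zero set, which is exactly $\epsilon$-ergodic shadowing.

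The step I expect to be the main obstacle is $(c)\Rightarrow(a)$, precisely because one must not lengthen the time axis: inserting connectors would shift all later indices and destroy the index-by-index comparison that $\epsilon$-ergodic shadowing requires, so the repair must reuse existing time slots via bounded overwriting, and the uniform connector length $M_{0}$ must be extracted from topological mixing by a compactness argument. The remaining delicate point is on the other side, in the chain-transitivity step of $(a)\Rightarrow(b)$: one must arrange the oscillating pseudo-orbit and select the near-visit times so that the spliced arc genuinely begins near $x$ and ends near $y$ with all chain transitions below $\delta$, which is where the density bookkeeping has to be carried out carefully.
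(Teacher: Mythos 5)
You should know at the outset that the paper contains no proof of this statement: Theorem~\ref{gha} is quoted verbatim from Fakhari and Ghane \cite{FG} as background, so there is no internal argument to compare yours against. Judged on its own, your outline reproduces the standard proof strategy of \cite{FG} correctly, and it is worth noting that the fuzzy analogues proved later in the paper are precisely your first blocks: your alternating-blocks argument for chain transitivity is Theorem~\ref{t1}, your passage to powers $f^{k}$ is Theorem~\ref{t2}, your loop-repetition and positive-density argument for deducing shadowing is Theorem~\ref{t3}, and your chain-shadowing argument for $(b)\Rightarrow(c)$ is Theorem~\ref{t8}. The direction $(c)\Rightarrow(a)$, which you correctly single out as the technical heart, has no counterpart anywhere in the paper; the author explicitly leaves the fuzzy version of the full equivalence as an open problem in Section 5, citing the obstruction exhibited in Example~\ref{e7}.

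Two details in your sketch need repair, though neither is fatal to the approach. First, your ergodic-shadowing-implies-shadowing step only shadows \emph{finite} pseudo-orbits; to obtain the shadowing property for infinite $\delta$-pseudo-orbits you must shadow longer and longer initial segments by points $w_{k}$, take an accumulation point $z$ of $\{w_{k}\}$ using compactness, and verify via uniform continuity of each $f^{n}$ (at the cost of roughly doubling $\epsilon$) that $z$ shadows the whole sequence --- this is exactly the second half of the paper's proof of Theorem~\ref{t3}, and omitting it leaves the implication incomplete. Second, in the overwriting argument for $(c)\Rightarrow(a)$, density zero of the bad set $B$ does not prevent bad indices from clustering, so your windows $[i,i+M_{0}]$, $i\in B$, can overlap, and then the ``surviving endpoints'' you propose to join are not well defined. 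The fix is to merge overlapping windows into maximal intervals: each merged interval automatically has length at least $M_{0}$, so uniform chain mixing still supplies a connecting $\delta$-chain of exactly the required length, and the union of merged intervals still has density zero because it contains at most $(M_{0}+1)\,card\bigl(B\cap\{0,\dots,n-1\}\bigr)$ points among $\{0,\dots,n-1\}$. With these two emendations your proposal is a complete and correct proof of the cited theorem.
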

\section{Fuzzy Dynamical Systems}
We begin this section by some definitions which is an extension to fuzzy topology of the concepts in the previous section and will be used in Theorems \ref{t1} to \ref{t8} and Section 4.  \\Suppose $X$ is a nonempty set and $f:X\rightarrow X$ is an arbitrary map. The $n-$th iteration $f^{n}$  of the map $f$ is defined inductively by $f^{1} = f$
 and $f^{n}= f o f ^{n-1}$ for $ n \geq2$  and the orbit of $x$ under $f$ is the set of points $\{x, f(x), f^{2}(x), . . .\}$. It is well known\cite{Z} that $(X, f)$  induces a dynamical
system $(\mathbb{F}(X), \widehat{f})$, called fuzzy discrete dynamical system, that is defined on the space
$\mathbb{F}(X)$ of all fuzzy compact subsets of $X$. The map
$\widehat{f}$ is usually called fuzzification (see \cite{Ka} and \cite{Kb} for more details. Now, we consider a  fuzzy discrete dynamical system, denoted by $(X, M,\ast, f)$,
 as a fuzzy continuous map $f:X\rightarrow X$ with all iterations and identity map, where    $(X, M,\ast)$ is a fuzzy metric space.
\\
\begin{df}Given a number $\delta>0$. A sequence $\{x_{i}\}_{i=0}^{b}$ of points in $X$ is called a \emph{$\delta-F$-pseudo-orbit} of $f$ if there exists \\$T_{0}\in (0,\infty)$ such that $M(f(x_{i}), x_{i+1}, T_{0})>1-\delta$ for all $0\leq i< b$. \end{df}
\indent If $b<\infty$, then we say that the finite $\delta-F$-pseudo-orbit $\{x_{i}\}_{i=0}^{b}$ of $f$ is a \emph{$\delta-F-$chain} of $f$ from $x_{0}$ to $x_{b}.$\\ We say that a fuzzy dynamical system $(X, M , \ast, f)$ has the \\\emph{$F-$shadowing property}, if for any $\epsilon>0$ there is a $\delta>0$ such that every $\delta-F-$pseudo-orbit  $\{x_{i}\}_{i=0}^{b}$ of $f$ can be $\epsilon-F$-traced by some point in $X$ that is:\\
 There is $T_{0}>0$ and $x\in X$ such that $M(f^{i}(x), x_{i}, ~T_{0})>1-\epsilon$, for each $0\leq i\leq b $.
\\
 Let $f$ be a fuzzy continuous map. For any two fuzzy open subsets $U$ and $V$ of $X$, let $$N^{f}(U, V)=\{m\in\mathbb{N}; f^{m}(U)\cap V\neq\emptyset\}.$$ We say that $f$ is \emph{$F$-topological transitive} if for any two fuzzy open subsets $U$ and $V$ of $X$,  $N^{f}(U,~V)\neq \emptyset$. \emph{$F-$Topological mixing}
 means that for any two fuzzy open subsets $U$ and $V$, the set $N^{f}(U, V)$ contains any natural number $n\geq n_{0},$ for some fixed $n_{0}\in \mathbb{N}.$
From here the section we assume that $(X, M, \ast)$ is a complete and compact fuzzy metric space, $\mathbb{R}^{+}=(0, \infty)$ and $\mathbb{N}$ the positive integers. For brevity, we write the fuzzy metric spaces as $X$,
wherever there is no risk of confusion.\begin{df}A mapping $f$ is called \emph{$F-$chain transitive} if for any two points $x, y\in X$ and any $\delta>0$ there exists a $\delta-F-$chain from $x$ to $y$. The mapping $f$ is called \emph{$F$-chain mixing} if for any two points $x, y \in X$ and any $\delta>0,$ there is a positive integer $N_{0}$ such that for any integer $n \geq N_{0}$ there is a $\delta-F-$chain from $x$ to $y$ of length $n$. \end{df}
\indent Given a sequence $\mu=\{x_{i}\}_{i=0}^{\infty}$, let $$N^{f}(\mu, \delta,~t)=\{i; M(f(x_{i}), x_{i+1}, t)\leq 1-\delta\}$$ and $$N_{n}^{f}(\mu, \delta, t)=N^{f}(\mu, \delta,t)\cap\{1, ..., n-1\}.$$ For a sequence $\mu$ and a point $x$ of $X$, let $$N_{s}^{f}(\mu, x, \delta, t)=\{i; M(f^{i}(x), x_{i},  t)\leq 1-\delta\}$$ and $$N_{sn}^{f}(\mu, x, \delta, t)=N_{s}^{f}(\mu, x, \delta, t)\cap\{1, ..., n-1\}.$$\begin{df} A sequence $\mu$ is a \emph{$\delta-F$-ergodic pseudo orbit}, if there exist $T_{0}\in(0, \infty)$ such that $$\lim_{n\rightarrow\infty}\frac{card( N_{n}^{f}(\mu, \delta, T_{0}))}{n}=0.$$ A $\delta-F$-ergodic pseudo orbit, is said to be \emph{$\epsilon-F$-ergodic shadowed} by a point $x$ in $X$ if $$\lim_{n\rightarrow\infty}\frac{card( N_{sn}^{f}(\mu, x, \delta,~T_{0}))}{n}=0.$$ \end{df}
 A mapping $f$ has \emph{$F$-ergodic shadowing property} if for any $\epsilon>0$ there is a $\delta>0$ such that any $\delta-F$-ergodic pseudo orbit of $f$ can be $\epsilon-F-$ergodic shadowed by some point in $X.$\\
 Now we are ready to show the results of this paper.
 \begin{them}\label{t1}Any mapping with $F-$ergodic shadowing property is $F-$chain transitive.\end{them}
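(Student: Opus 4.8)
The plan is to use the $F$-ergodic shadowing property as an engine that converts a pseudo-orbit carrying only \emph{one} genuine error into a single honest orbit running near both $x$ and $y$, and then to splice a true orbit segment of $x$, a segment of that honest orbit, and a segment of $y$ into one finite $\delta$-$F$-chain. Fix $x,y\in X$ and $\delta>0$. First I would control the loss incurred when composing two fuzzy estimates: by fuzzy continuity of $f$ together with Lemma \ref{l1} (to split the level $1-\delta$ into two factors under $\ast$, and using property $(4)$ of $M$), choose $\epsilon\in(0,\delta)$ so that $M(u,v,T_0)>1-\epsilon$ forces $M(f(u),f(v),T_0)>1-\delta$. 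Apply the $F$-ergodic shadowing property to this $\epsilon$ to obtain the associated $\delta_0>0$. Then I would take $\mu=\{x_i\}_{i=0}^{\infty}$ to follow the orbit of $x$ for a long initial stretch and switch once to the orbit of $y$, namely $x_i=f^i(x)$ for $0\le i\le k-1$ and $x_i=f^{i-k}(y)$ for $i\ge k$. For every index except $i=k-1$ one has $M(f(x_i),x_{i+1},t)=1$, so $N^{f}(\mu,\delta_0,T_0)\subseteq\{k-1\}$ is finite, hence of density zero, and $\mu$ is a $\delta_0$-$F$-ergodic pseudo orbit; shadowing yields $z\in X$ and $T_0>0$ with $D=\{i:M(f^i(z),x_i,T_0)\le 1-\epsilon\}$ of density zero.

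The tempting next step is to locate a good index $p$ inside the $x$-block and a good index $q$ inside the $y$-block (outside $D$), and to read off the chain $x,f(x),\dots,f^p(x),f^{p+1}(z),\dots,f^q(z),\dots$, where the two splices are controlled because $p+1,q+1\notin D$. \emph{This is exactly where the main obstacle lies, and it is twofold.} A $\delta$-$F$-chain must begin \emph{exactly} at $x$ and end \emph{exactly} at $y$, yet the construction only produces a chain running from $x$ into the forward orbit of $y$ (to some $f^m(y)$ with $m\ge 1$), and one cannot run that orbit backwards to land on $y$ itself; the honest point $y$ sits at the single boundary index $k$. Worse, since $D$ is only of density zero it may swallow an entire fixed finite window, in particular all of $\{0,\dots,k-1\}$, so there is no guarantee of \emph{any} good index in the $x$-block at all. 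No bounded number of restarts of the $y$-block repairs this, because finitely many block-starts form a set that a density-zero $D$ can still contain.

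I expect to overcome this by trading the fragile endpoint bookkeeping for a compactness argument and running it by contradiction. Since $X$ is compact and $\tau_M$ is metrizable (as holds for every fuzzy metric space), Conley's attractor--repeller theory applies to the induced topological system $(X,\tau_M,f)$: if $f$ were \emph{not} $F$-chain transitive there would be a proper open forward-invariant trapping region $U$ (with $f(\overline{U})\subseteq U$) together with its complementary repeller $A^{*}\subseteq X\setminus\overline{U}$ and attractor $A\subseteq U$. I would then feed into the shadowing engine the pseudo orbit built by alternating ever-growing true-orbit blocks taken from $A$ and from $A^{*}$: the transitions still have density zero, so it is a $\delta_0$-$F$-ergodic pseudo orbit, while the indices with $x_i\in A$ and those with $x_i\in A^{*}$ each have positive upper density. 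The crucial point is that a set of positive upper density survives the removal of the density-zero set $D$: hence there is a good index $i_0$ with $f^{i_0}(z)\in U$, which by forward invariance forces $f^{j}(z)\in U$ for all $j\ge i_0$, and there is a good index $i_1>i_0$ with $f^{i_1}(z)$ near $A^{*}$, hence outside $\overline{U}$ --- a contradiction. Ruling out every proper trapping region yields, via Conley's theorem, a $\delta$-$F$-chain between every pair of points, and in particular from $x$ to $y$, establishing $F$-chain transitivity.
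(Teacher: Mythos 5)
Your route is genuinely different from the paper's, and your diagnosis of the naive single-switch construction is correct on both counts. But the paper's own proof fixes both obstacles directly, without any Conley machinery: it uses infinitely many alternating blocks of \emph{growing} length (intervals $[a_k,b_k]$ with $a_k=b_{k-1}+k$, $b_k=a_k+k+1$), so the jump indices have density zero while the $x$-block indices and the $y$-block indices each have positive density --- hence neither family can be swallowed by the density-zero bad set of the shadowing point; and it takes the $y$-blocks to be backward-orbit segments ending exactly at $y$, splicing in a point $w\in f^{-s}(y)$ so that the final chain
$x, f(x),\dots,f^{j-1}(x), f^{i}(z),\dots,f^{l-1}(z), w, f(w),\dots,f^{s-1}(w), y$
terminates at $y$ itself, its last step being exact since $M(f(f^{s-1}(w)),y,t)=M(y,y,t)=1$. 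This preimage trick is precisely where surjectivity silently enters the paper's argument (as in Fakhari--Ghane, and as made explicit in the hypothesis of Theorem \ref{t3}).

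The genuine gap in your proposal is the Conley step itself: the assertion that failure of ($F$-)chain transitivity produces a trapping region $U$ whose attractor $A$ \emph{and} complementary repeller $A^{*}$ are both nonempty. For continuous maps that are not onto this is false: for $f(x)=x/2$ on $[0,1]$ the map is not chain transitive (no $\delta$-chain from $0$ to $1$ when $\delta<1/2$), yet the only nonempty attractor is $\{0\}$, its basin is all of $[0,1]$, and every complementary repeller is empty --- so your pseudo-orbit has no $A^{*}$-blocks to draw from and the contradiction never starts. Since neither the theorem as stated nor your proof assumes $f$ onto, this step is unjustified; granting ontoness it becomes true, but it then needs its own argument (if every point eventually entered $U$, compactness of $X$ would give $f^{N}(X)\subseteq U$ for some $N$, contradicting $f^{N}(X)=X$), which you do not supply. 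Secondarily, ``Conley's theory applies to $(X,\tau_M,f)$'' glosses a real issue: chain notions are uniform, not topological, and a $\delta$-$F$-chain carries an existential time parameter $T_0$; you would need either to rebuild the trapping region directly from the fuzzy reachability set $\{w:\ \exists\ \delta\text{-}F\text{-chain from } x \text{ to } w\}$, or to prove that the sets $\{(u,v): M(u,v,t)>1-r\}$ generate the unique uniformity of the compact space $X$, before metric-based attractor--repeller theory can be quoted.
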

 \begin{proof}
Let $f$ be a mapping that has the $F-$ergodic shadowing property and $x,~y \in X$. Suppose $\epsilon$ is an arbitrary positive number and $\delta>0$ is a number corresponding to $\epsilon$ by the $F-$ergodic shadowing property.\\ Let $\{[a_{i}, b_{i}]; i=0, 1, 2, ...\}$ be a family of intervals such that $a_{0}=0$, $b_{0}=1$ and $a_{k}=b_{k-1}+k$, $b_{k}=a_{k}+k+1$ for all $k\geq1$. This is clear that the set $\{a_{i},~b_{i};  i=0,~1,~2,~...\}$ has density zero in natural numbers. Define the sequence $\{x_{i}\}_{i=0}^{\infty}$ by the following formulas:\[ x_{i} = \left\lbrace
  \begin{array}{c l}
   f^{i-a_{k}}(x)& \text{if ~$~ a_{k}\leq i < b_{k}$},\\
    f^{i-b_{k}}(x_{k}) & \text{if ~$~b_{k}\leq i < a_{k+1}$}.
  \end{array}
\right. \]
 \indent By definition of $\{x_{i}\}_{i=0}^{\infty}$, we have $M(f(x_{i}),  x_{i+1},t)=1$ where $i=a_{k}$ or $i=b_{k}$ for some $k\geq0$. This implies that $N^{f}(\mu, \delta, t)$ is a subset of $\{a_{k}, b_{k}; k=0, 1, 2, ....\}$. Thus\\ $\lim_{n\rightarrow\infty}\frac{N^{f}(\mu, \delta, t)}{n}$$$\leq\lim_{n\rightarrow\infty}\frac{card\{a_{k}, b_{k}; k=0, 1, 2, ....\}\cap \{0, 1, 2, ..., n-1\}}{n}=0.$$ Where $card\{a_{k}, b_{k}; k=0, 1, 2, ....\}$ is cardinality of \\$\{a_{k}, b_{k}; k=0, 1, 2, ....\}.$ Then $\mu=\{x_{i}\}_{i=0}^{\infty}$ is a $\delta-F-$ergodic pseudo orbit and can be $\epsilon-F-$ergodic shadowed by some point $z$ in $X$. Since $N_{s}^{f}(\mu, ~z,~ \epsilon,~t)$ has density zero, for some $t\in(0,~\infty).$ Then we can find $i, j, l, s\in \mathbb{N}$ with $j<l$ such that $M(f^{j}(x), f^{i}(z),  t)\leq 1-\epsilon$, $M(f^{l}(z), w,  t)\leq1-\epsilon$ and $w\in f^{-s}(y)$.\\
 \indent Therefore $$\{x, f(x), ..., f^{j-1}(x), f^{i}(z), ..., f^{l-1}(z), w, f(w), ..., f^{s-1}(w), y\}$$ is an $\epsilon-F-$pseudo orbit from $x$ to $y.$ \end{proof}\begin{them}\label{t2}If $f$ has the $F-$ergodic shadowing property then for any natural number $k$ the mapping $f^{k}$ has also the $F-$ergodic shadowing property.\end{them}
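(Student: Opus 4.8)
The plan is to reduce the $F$-ergodic shadowing of $f^{k}$ to that of $f$ by interpolating each point of an $f^{k}$-pseudo-orbit with its first $k-1$ forward $f$-iterates. I would fix $\epsilon>0$ and let $\delta>0$ be the number furnished by the $F$-ergodic shadowing property of $f$ for this $\epsilon$, and claim the same $\delta$ works for $f^{k}$. Given any $\delta-F$-ergodic pseudo orbit $\mu'=\{y_{j}\}_{j=0}^{\infty}$ of $f^{k}$, with witnessing constant $T_{0}$, I would define a sequence $\mu=\{x_{i}\}_{i=0}^{\infty}$ for $f$ by setting $x_{jk+r}=f^{r}(y_{j})$ for $0\leq r<k$ and all $j\geq 0$; equivalently $x_{jk}=y_{j}$ and the intermediate entries fill in the $f$-orbit segment joining $y_{j}$ to $f^{k}(y_{j})$.

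First I would verify that $\mu$ is a $\delta-F$-ergodic pseudo orbit of $f$ with the same $T_{0}$. For an index $i=jk+r$ with $0\leq r<k-1$ one has $f(x_{i})=f^{r+1}(y_{j})=x_{i+1}$, so $M(f(x_{i}),x_{i+1},T_{0})=1>1-\delta$; the only steps that can fail lie at the block boundaries $i=(j+1)k-1$, where $M(f(x_{i}),x_{i+1},T_{0})=M(f^{k}(y_{j}),y_{j+1},T_{0})$. Hence $i\in N^{f}(\mu,\delta,T_{0})$ forces $i=(j+1)k-1$ with $j\in N^{f^{k}}(\mu',\delta,T_{0})$, so, up to bounded edge corrections, the bad $f$-indices below $nk$ are in bijection with the bad $f^{k}$-indices below $n$. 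Writing $c_{N}=\mathrm{card}(N_{N}^{f^{k}}(\mu',\delta,T_{0}))$, this yields
\[
\frac{\mathrm{card}(N^{f}_{nk}(\mu,\delta,T_{0}))}{nk}\leq\frac{c_{n}+1}{nk}=\frac{1}{k}\cdot\frac{c_{n}}{n}+\frac{1}{nk}\longrightarrow 0,
\]
and since every $m$ lies within $k$ of a multiple of $k$, the full density $\mathrm{card}(N^{f}_{m}(\mu,\delta,T_{0}))/m$ also tends to $0$.

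Next I would apply the $F$-ergodic shadowing property of $f$ to obtain a point $z\in X$ that $\epsilon-F$-ergodic shadows $\mu$, so that the set $A=\{i : M(f^{i}(z),x_{i},T_{0})\leq 1-\epsilon\}$ has density zero. The decisive observation is that $(f^{k})^{j}(z)=f^{jk}(z)$ and $x_{jk}=y_{j}$, so the indices $j$ at which $z$ fails to $\epsilon-F$-shadow $\mu'$ under $f^{k}$ are exactly those $j$ with $jk\in A$. It then remains to check that $\{j : jk\in A\}$ has density zero. Denoting $d_{N}=\mathrm{card}(A\cap\{0,\dots,N-1\})$, with $d_{N}/N\to 0$, I would estimate
\[
\frac{\mathrm{card}(\{\,j<n : jk\in A\,\})}{n}\leq\frac{d_{nk}}{n}=k\cdot\frac{d_{nk}}{nk}\longrightarrow 0,
\]
which shows $z$ is the required $\epsilon-F$-ergodic shadowing point for $\mu'$ and finishes the argument.

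The main obstacle I anticipate is purely the density bookkeeping rather than any genuinely fuzzy difficulty, since the fuzzy metric enters only through the harmless exact steps $M=1$ and the threshold inequalities, all carried by a single $T_{0}$. The one point deserving care is the passage from density zero of $A$ along all $f$-indices to density zero along the subsequence of multiples of $k$: this is not automatic for an arbitrary subsequence, but succeeds here precisely because $k$ is \emph{fixed}, so that $d_{nk}/n=k\,(d_{nk}/nk)$ still vanishes. Threading the same $T_{0}$ through the pseudo-orbit condition on $\mu$, the shadowing of $\mu$, and the final shadowing of $\mu'$ is the other piece of bookkeeping that must be kept consistent, and it works because the definitions of $\delta-F$-ergodic pseudo orbit and of $\epsilon-F$-ergodic shadowing share the same constant $T_{0}$.
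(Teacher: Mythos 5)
Your proposal is correct and follows essentially the same route as the paper: interpolate the $f^{k}$-pseudo-orbit with the intermediate $f$-iterates $f^{r}(y_{j})$, note that only block-boundary indices can be bad, apply the $F$-ergodic shadowing of $f$ to the interpolated sequence, and recover the shadowing for $f^{k}$ by the factor-of-$k$ density estimate along multiples of $k$. Your write-up is in fact more careful than the paper's on two points — you correctly record $M(f(x_{i}),x_{i+1},T_{0})=1$ at interior indices (the paper misprints this as $0$) and you make the density bookkeeping explicit where the paper only asserts it — but these are refinements, not a different argument.
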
\begin{proof}Let $f$ has the $F-$ergodic shadowing property. Given $\epsilon>0$, let $\delta>0$ be an $\epsilon$ modulus of $F-$ergodic shadowing property of $f$. Suppose  $\{x_{i}\}_{i=0}^{\infty}$ is an $\delta-F-$ergodic pseudo orbit for $f^{k}$. So there exist $T_{0}\in(0,\infty)$ such that $\lim_{n\rightarrow\infty}\frac{\mid N_{n}^{f^{k}}(\{x_{i}\}_{i=0}^{\infty}, x, \delta, T_{0})\mid}{n}=0.$
 We define the sequence $\{z_{j}\}_{j=0}^{\infty}$ with\\ $z_{ki+l}=f^{l}(x_{i})$ for $0\leq l< k, n\in \mathbb{Z_{+}}$, that is $$\{z_{j}\}_{j=0}^{\infty}=\{x_{0}, f(x_{0}),~...,~f^{k-1}(x_{0}),~x_{1}, f(x_{1}), ..., f^{k-1}(x_{1}),...\}.$$ It is easy to see that:
 \[ M(f(z_{ki+l}), z_{ki+l+1},  T_{0}) = \left\lbrace
  \begin{array}{c l}
 0& \text{if $ 0\leq l< k-1$},\\
    M(f^{k}(x_{i}), x_{i+1}, T_{0}) & \text{if ~$~l=k-1$}.
 \end{array}
  \right. \]\indent So $\{z_{j}\}_{j=0}^{\infty}$ is an $\delta-F-$ergodic pseudo orbit, $f$. Then there exists $x\in X$ such that $\{z_{j}\}_{j=0}^{\infty}$ is $\epsilon-F-$shadowed by $x$ in dynamical system $(X,  M, \ast,  f).$ Since $M(f^{ki}(x), z_{ki},  T_{0})=M((f^{k})^{i}(x), x_{i},  T_{0})$, so $$card( N_{sn}^{f^{k}}(\{x_{i}\}_{i=0}^{\infty}, x, \delta,T_{0}))\leq card( N_{skn}^{f}(\{z_{j}\}_{j=0}^{\infty}, x, \delta,T_{0})).$$ This implies that
  \begin{eqnarray*}
  \lim_{n\rightarrow\infty}\frac{card( N_{sn}^{f^{k}}(\{x_{i}\}_{i=0}^{\infty},x,\delta,T_{0}))}{n}\\ \nonumber
  &\leq k\lim_{n\rightarrow\infty}\frac{card( N_{skn}^{f}(\{z_{j}\}_{j=0}^{\infty},x,\delta,T_{0}))}{kn}=0.
  \end{eqnarray*}\end{proof}
  \begin{them}\label{t3}Let $f$ be a $F-$continuous onto map. For the fuzzy dynamical system $(X, ~M,~\ast, ~f)$, if $f$ has the $F-$ergodic shadowing property then it has the $F-$shadowing property.\end{them}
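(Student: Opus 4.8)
The plan is to deduce ordinary $F$-shadowing of finite pseudo-orbits from $F$-ergodic shadowing of an infinite pseudo-orbit in which the given finite block recurs with \emph{positive} density. First I would fix $\epsilon>0$ and take $\delta>0$ to be the modulus furnished by the $F$-ergodic shadowing property for this $\epsilon$, and then argue that this same $\delta$ witnesses $F$-shadowing. To that end I would start from an arbitrary $\delta$-$F$-pseudo-orbit $\{x_i\}_{i=0}^{b}$, with $M(f(x_i),x_{i+1},T_1)>1-\delta$ for $0\le i<b$, and aim to produce a point that $\epsilon$-$F$-traces it.

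The first genuine step is to close the finite block into a cycle. Since $f$ has $F$-ergodic shadowing, Theorem \ref{t1} guarantees that $f$ is $F$-chain transitive, so I can choose a $\delta$-$F$-chain $x_b=c_0,c_1,\dots,c_m=x_0$ running from $x_b$ back to $x_0$, say with $M(f(c_j),c_{j+1},T_2)>1-\delta$. Setting $T_0=\max\{T_1,T_2\}$ and using that $t\mapsto M(x,y,t)$ is nondecreasing, all of these inequalities survive at the single time $T_0$. I would then form the periodic sequence $\mu=\{y_n\}_{n\ge 0}$ of period $p=b+m$ obtained by repeating the block $x_0,x_1,\dots,x_b,c_1,\dots,c_{m-1}$. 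Because the junctions $x_b\to c_1$ and $c_{m-1}\to x_0$ are themselves steps of the chain, every consecutive pair of $\mu$ obeys $M(f(y_n),y_{n+1},T_0)>1-\delta$, so $N^{f}(\mu,\delta,T_0)=\emptyset$ and $\mu$ is trivially a $\delta$-$F$-ergodic pseudo-orbit.

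Next I would feed $\mu$ to the $F$-ergodic shadowing hypothesis to obtain a point $z\in X$ for which $N_{s}^{f}(\mu,z,\epsilon,T_0)$ has density zero. The original block $(x_0,\dots,x_b)$ now occupies the index windows $[kp,\,kp+b]$ for every $k\ge 0$, and these windows have positive density $(b+1)/p$ in $\mathbb{N}$. If each such window met $N_{s}^{f}(\mu,z,\epsilon,T_0)$, that set would have density at least $1/p>0$, contradicting density zero; hence some window $[k_0p,\,k_0p+b]$ avoids it entirely, giving $M(f^{\,k_0p+j}(z),x_j,T_0)>1-\epsilon$ for all $0\le j\le b$. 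Taking $w=f^{\,k_0p}(z)$ then yields $M(f^{\,j}(w),x_j,T_0)>1-\epsilon$ for $0\le j\le b$, i.e. $w$ $\epsilon$-$F$-traces the finite pseudo-orbit, which is exactly $F$-shadowing.

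I expect the main difficulty to be structural rather than computational: one must realize that the block has to recur with positive density before the ergodic (density-zero) shadowing can force an honest shadowing of one entire copy, and this is precisely why the loop is closed by a chain instead of being padded with long honest orbit segments (padding would make the blocks, as well as the jumps, density zero, and the counting argument would collapse). The remaining delicate point is the bookkeeping of the three time parameters $T_1$, $T_2$, and the time in the ergodic-shadowing conclusion; the monotonicity of $M$ in its time argument is what permits collapsing them to a single $T_0$, and I would invoke it at each junction.
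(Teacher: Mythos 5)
Your treatment of finite pseudo-orbits is correct and is essentially identical to the first half of the paper's own proof: the paper also closes the finite block into a loop using the $F$-chain transitivity supplied by Theorem \ref{t1}, repeats it periodically, feeds the resulting (jump-free, hence trivially $\delta$-$F$-ergodic) sequence to the ergodic shadowing hypothesis, and locates, by the same positive-density pigeonhole argument, one copy of the block that the density-zero error set misses entirely. Your handling of the several time parameters via the monotonicity of $M(x,y,\cdot)$ is also sound.

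However, there is a genuine gap: you stop after the finite case and declare that this ``is exactly $F$-shadowing,'' but the paper's definition of the $F$-shadowing property quantifies over every $\delta$-$F$-pseudo-orbit $\{x_i\}_{i=0}^{b}$ with $b\le\infty$; infinite pseudo-orbits are included (only the case $b<\infty$ is singled out with the special name ``$\delta$-$F$-chain''). Your loop-closing device cannot be applied to an infinite pseudo-orbit, since there is no last point from which to chain back to $x_0$, so a separate argument is required, and this is precisely what the second half of the paper's proof supplies. There, given an infinite $\delta$-$F$-pseudo-orbit $\{z_j\}_{j=0}^{\infty}$, each finite truncation $\{z_j\}_{j=0}^{k}$ is $\epsilon$-$F$-shadowed by some point $w_k$ (the part you proved), an accumulation point $z$ of $\{w_k\}_{k=1}^{\infty}$ is extracted using compactness of $X$, and then $z$ is shown to trace the whole orbit: for fixed $n$ one chooses, by Lemma \ref{l1}, a number $\epsilon'$ with $(1-\epsilon')\ast(1-\epsilon')\ge 1-\epsilon$, uses fuzzy continuity of $f^{n}$ to produce $\delta'$ and $t'$, picks $k_n>n$ with $M(z,w_{k_n},t')>1-\delta'$, and concludes
\[
M(f^{n}(z),z_{n},2t)\;\ge\; M(f^{n}(z),f^{n}(w_{k_n}),t)\ast M(f^{n}(w_{k_n}),z_{n},t)\;\ge\;(1-\epsilon')\ast(1-\epsilon')\;\ge\;1-\epsilon .
\]
Without this finite-to-infinite passage (or some equivalent compactness-and-continuity limit argument), your proposal does not establish the theorem as stated.
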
\begin{proof} First, we prove that any finite $F$-pseudo orbit of $f$ can be shadowed by true orbit. Let $\mu=\{z_{j}\}_{j=0}^{n}$ be a finite $\delta-F-$pseudo orbit, by Theorem 3.5, we can find a $\delta-F-$pseudo orbit \\$\lambda=\{z_{n}=y_{0},~...,~y_{m}=z_{0}\}$ from $z_{n}$ to $z_{0}$. Then \\$\omega=\{z_{0}, ..., z_{n}, y_{1}, ..., y_{m-1}, z_{0}, z_{1}, ..., z_{n}, y_{1},~...\}$ is a $\delta-F-$ergodic pseudo orbit and then can be $F-$ergodic shadowed by some point $x\in X.$ If  the set $N_{s}^{f}(\omega, x, \epsilon, T_{0})$ meet every $\mu$ interval then it would have positive density that is a contradict. Hence there exist at least one $k\in \mathbb{N}$ such that  $M(f^{k+i}(x), z_{i},t)>1-\epsilon$, for every $0\leq i\leq n$. In other words $\mu$ is $\epsilon-F-$ shadowed by $f^{k}(x)$.\\
  \indent Now suppose $\{z_{j}\}_{j=0}^{\infty}$ is an $\delta-F-$pseudo orbit of $f$, for\\ $k>0$, denote $x_{i}=z_{i}~(0\leq i\leq k)$. Then $\{x_{i}\}_{i=0}^{k}$ is an $\delta-F-$pseudo orbit of $f$, and so is $\epsilon-F-$shadowed by some point $w_{k}\in X$. Let $z\in X$ be an accumulation point of $\{w_{k}\}_{k=1}^{\infty}$, because $X$ is a compact and complete fuzzy metric space. We show that $\{z_{j}\}_{j=0}^{\infty}$ is $\epsilon-F-$shadowed by $z$.\\ \indent Fix $n\geq0$ and $t>0$. By Lemma \ref{l1}, there exist $\epsilon^{'}>0$ such that $(1-\epsilon^{'})(1-\epsilon^{'})\geq 1-\epsilon.$  Since $f^{n}$ is a fuzzy continuous map then there exist $\delta^{'}>0$ and $t^{'}>0$ such that if $M(z, x, t^{'})>1-\delta^{'}$ then $M(f^{n}(z), f^{n}(x), t)>1-\epsilon^{'}$, for every $x\in X.$\\ Take sufficiently large $k_{n}$ satisfying $k_{n}>n$ and \\$M(z, w_{k_{n}}, t^{'})>1-\delta^{'}$. Then
  \begin{eqnarray*}
  M(f^{n}(z), z_{n}, 2t)\\
  &\geq& M(f^{n}(z), f^{n}(w_{k_{n}}), t)\ast M(f^{n}(w_{k_{n}}), z_{n}, t)\\
  &\geq&(1-\epsilon^{'})\ast(1-\epsilon^{'})\\
  &\geq& 1-\epsilon.\end{eqnarray*} This prove that $\{z_{j}\}_{j=0}^{\infty}$ is $2\epsilon-$shadowed by $z.$  \end{proof} \begin{them}\label{t8}If $f$ has the $F$-shadowing and $F$-chain mixing properties then it has $F-$topologically mixing property.\end{them}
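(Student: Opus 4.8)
The plan is to transport to the fuzzy setting the classical implication that shadowing together with chain mixing forces topological mixing. Fix two nonempty fuzzy open sets $U$ and $V$ and choose points $x\in U$ and $y\in V$. Since the balls $B(z,r,t)$ form a base for $\tau_{M}$, there are radii $r_{1},r_{2}\in(0,1)$ and times $T_{1},T_{2}>0$ with $B(x,r_{1},T_{1})\subseteq U$ and $B(y,r_{2},T_{2})\subseteq V$. I would set $\epsilon=\min\{r_{1},r_{2}\}$ and let $\delta>0$ be the modulus furnished by the $F$-shadowing property for this $\epsilon$.

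Next I would feed the pair $x,y$ and this $\delta$ into the $F$-chain mixing hypothesis: there is $n_{0}\in\mathbb{N}$ so that for every $n\ge n_{0}$ one has a $\delta$-$F$-chain $\{x=x_{0},x_{1},\dots,x_{n}=y\}$ from $x$ to $y$ of length $n$. Such a chain is a finite $\delta$-$F$-pseudo-orbit, so $F$-shadowing produces a tracing point $u=u_{n}\in X$ and a time $T_{0}>0$ with $M(f^{i}(u),x_{i},T_{0})>1-\epsilon$ for all $0\le i\le n$. Evaluating this at the two endpoints gives $M(u,x,T_{0})>1-\epsilon\ge 1-r_{1}$ and $M(f^{n}(u),y,T_{0})>1-\epsilon\ge 1-r_{2}$.

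It then remains to turn these two inequalities into the memberships $u\in U$ and $f^{n}(u)\in V$. Granting that, one gets $f^{n}(u)\in f^{n}(U)\cap V$, whence $n\in N^{f}(U,V)$; since $n\ge n_{0}$ was arbitrary, $N^{f}(U,V)$ contains every integer $n\ge n_{0}$, which is precisely the $F$-topological mixing of $f$. Thus the combinatorial skeleton of the proof is entirely classical, and the fuzzy content is concentrated in the final membership step.

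The step I expect to be the genuine obstacle is exactly this conversion, namely the alignment of the time $T_{0}$ produced by $F$-shadowing with the times $T_{1},T_{2}$ attached to the neighbourhoods of $x$ and $y$. The time $T_{0}$ is only asserted to exist and is a priori unrelated to $T_{1},T_{2}$, and since $t\mapsto M(\cdot,\cdot,t)$ is nondecreasing the inequality runs the wrong way when $T_{0}>T_{i}$, so closeness at time $T_{0}$ does not by itself yield closeness at a smaller prescribed time. The route I would pursue rests on the fact that, for a fixed centre and a fixed time, the balls of decreasing radius shrink to the centre, since $\bigcap_{r\in(0,1)}B(x,r,t)=\{x\}$ by the axiom $M(x,y,t)=1$ if and only if $x=y$; granting the accompanying neighbourhood-base property, that for every open $U\ni x$ and every $t>0$ some radius $r$ gives $B(x,r,t)\subseteq U$, one may then take the witnesses for $x$ in $U$ and for $y$ in $V$ at a time dominating $T_{0}$ and read off $u\in U$ and $f^{n}(u)\in V$ using monotonicity in $t$ together with the remark after the definition of a fuzzy open set. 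Establishing this neighbourhood-base property at a prescribed time, and keeping the witnesses compatible as $n$ (hence possibly $T_{0}$) varies, is the point where the fuzzy framework demands genuinely more care than the metric one.
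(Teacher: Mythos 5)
Your combinatorial skeleton is exactly that of the paper's own proof of Theorem~\ref{t8}: pick $x\in U$, $y\in V$ and balls $B(x,\epsilon,t)\subset U$, $B(y,\epsilon,t)\subset V$, let $\delta$ be an $\epsilon$-modulus of $F$-shadowing, use $F$-chain mixing to get, for every $n\ge N_{0}$, a $\delta$-$F$-chain from $x$ to $y$ of length $n$, shadow it, and evaluate at the two endpoints. Where you differ is precisely the step you flag as the ``genuine obstacle'': the paper simply writes $M(f^{i}(z),x_{i},t)>1-\epsilon$ with the \emph{same} $t$ used to define the balls, i.e.\ it tacitly assumes that the tracing time supplied by $F$-shadowing is the prescribed ball time, whereas the definition only provides \emph{some} orbit-dependent $T_{0}$, and monotonicity of $M$ in the time variable runs the wrong way when $T_{0}$ exceeds the ball times. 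So your diagnosis is sharper than the paper's treatment: you have located a gap that the paper's proof hides by a conflation of notation.

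However, your proposal does not close that gap, and it cannot be closed. The lemma you ask to ``grant'' --- that for every open $U\ni x$ and every \emph{prescribed} $t>0$ there is a radius $r$ with $B(x,r,t)\subseteq U$ --- is false in general: in the paper's Example~\ref{e2}, $\tau_{M}$ is discrete, so $\{x\}$ is fuzzy open, yet for every $t\ge 1$ each ball $B(x,r,t)=\{y:\min\{x,y\}/\max\{x,y\}>1-r\}$ is a nondegenerate interval around $x$, so no radius works at that time (shrinking of the balls to the centre, $\bigcap_{r}B(x,r,t)=\{x\}$, does not give a neighbourhood base). Even in spaces where the lemma does hold, such as the standard fuzzy metric, where $B(x,r,t)$ is the $d$-ball of radius $rt/(1-r)$, the circularity you mention is fatal: $\epsilon$ and the witnesses must be fixed \emph{before} $T_{0}$ is produced, and $B(x,\epsilon,T_{0})$ swells to all of $X$ as $T_{0}\to\infty$. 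In fact no argument can succeed, because the statement is false under the paper's definitions: as observed after Example~\ref{e6}, with respect to $M_{d}$ on $X=[0,1]$ \emph{every} continuous map has the $F$-shadowing property, and the same ``choose $T_{0}$ large'' argument shows every map is $F$-chain mixing; the identity map therefore satisfies both hypotheses, yet it is not $F$-topologically mixing, since disjoint nonempty open sets $U,V$ give $N^{f}(U,V)=\emptyset$. The implication can only be rescued by strengthening the definitions so that the times in the pseudo-orbit and shadowing conditions are prescribed in advance (quantified universally before $\delta$) rather than existentially after the fact; no refinement of the membership argument, yours or the paper's, can substitute for that.
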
\begin{proof}Suppose that $f$ has the $F$-chain mixing property. Given two fuzzy open subsets $U$ and $V$ of $X$. Choose $x\in~ U$, $y\in V$, $t>0$ and $\epsilon>0$ such that $B(x, \epsilon, t)\subset U$ and $B(y, \epsilon, t)\subset V.$ Let $\delta>0$ be an $\epsilon-$modulus of $F-$shadowing property. Since $f$ is $F-$chain mixing then there is a positive integer $N_{0}$ such that for any integer $n\geq N_{0},$ there is a $\delta-F-$chain, $x=x_{1}, x_{2},...., x_{n}=y$, of length $n$ from $x$ to $y$. By $F$-shadowing property we can find $z\in X$ such that $M(f^{i}(z), x_{i}, t)>1-\epsilon,$ for all $1\leq i\leq n$. Particulary $M(z, x, t)>1-\epsilon$ and \\$M(f^{n}(z), y, t)>1-\epsilon$. So $z\in U$ and $f^{n}(z)\in V.$ Then \\$f^{n}(U)\cap V\neq\emptyset$. \end{proof}
  \section{Examples} In the following, we give an interval map that has the $F-$topological mixing property.
  \begin{exam}\label{e6} Let $X=[0, 1]$ and $a\ast b=ab$ for all $a, b \in X$. Let $(X, M_{d},\ast)$ be the standard fuzzy metric space induced by $d$, where $d(x,y)=|x-y|$ for all $x, y\in X$. Suppose $f:X\longrightarrow X$ is the tent map which is defined by:
 \[ f(x)= \left\lbrace
  \begin{array}{c l}
 2x& \text{if ~$~ 0\leq x\leq \frac{1}{2}$},\\
    2(1-x) & \text{if ~$\frac{1}{2}\leq x\leq 1$}.
 \end{array}
  \right. \] It is known that the tent map has the shadowing and chain mixing properties (see \cite{CKY,RW}) and we show that $f$ has the $F-$shadowing and $F-$chain mixing properties.\\ Let $\epsilon>0$.
 It is clear that there is $T_{0}>0$, such that $M_{d}(x, y, T_{0})>1-\epsilon$ for all $x, y\in X$. So  $f$ has the $F$-shadowing and $F$-chain-mixing properties.
    It follows from Theorem \ref{t8} that $f$ has the $F-$topological mixing property.\end{exam}\begin{rem}\cite{CKY}\label{rem2}
    Consider the family of tent maps, i.e., the piecewise linear maps $f_{\beta}(x):[0,1]\rightarrow [0,1]$, $\sqrt{2}\leq\beta\leq 2$, defined by:
     \[ f_{\beta}(x)= \left\lbrace
  \begin{array}{c l}
 \beta x& \text{if ~$~ 0\leq x\leq \frac{1}{2}$},\\
    \beta(1-x) & \text{if ~$\frac{1}{2}\leq x\leq 1$}.
 \end{array}
  \right. \]
 In \cite{CKY}, the authors proved that,
tent maps have the shadowing property  for almost all parameters $\beta$, although they fail to
have the shadowing property for an uncountable, dense set of parameters. But, similar to Example \ref{e6}, for an arbitrary positive number $\epsilon$, $M_{d}(x, y, T_{0})>1-\epsilon$ for all $x, y\in X$. So $(X, ~M_{d}, ~\ast, ~f_{\beta})$ has the $F-$shadowing property, for all $\sqrt{2}\leq\beta\leq 2$. \end{rem} From Example \ref{e6} it can be seen that for metric $M_{d}$ any continuous map has the $F$-shadowing property. Since the topology $\tau_{d}$ generated by $d$ coincides with the
topology  $\tau_{M_{d}}$ generated by the fuzzy metric $M_{d}$, so the metric $M_{d}$ is not a help for studying the dynamics.\\ \indent The following example shows that the \textbf{$F-$shadowing property} is not completely similar to the shadowing property, although fuzzy metric space $(X, M ,\ast)$ define the usual topological structure on $X$.
 \begin{exam}\label{e7} Let $X=(0,~1]$ and $f: X\rightarrow X$ be the following function (see Figure $1$):
 \[ f(x)= \left\lbrace
  \begin{array}{ccc}
\frac{3}{4}x+\frac{1}{8} &\text{if ~$~ 0< x\leq~ \frac{1}{2}$},\\
\frac{3}{2}x-\frac{1}{4} & \text{if ~$\frac{1}{2}\leq x\leq~ \frac{3}{4}$},\\
\frac{1}{2}x+\frac{1}{2} & \text{if ~$\frac{3}{4}\leq x\leq~ 1$}.
 \end{array}
  \right. \]
  This kind of discrete dynamical systems is an important example to investigate various shadowing properties; see  [8 Example $5.4.$] and [11, Example $4.2.$]
  for more details.\\
\indent(a) It is well known that $f$ does not have the shadowing property \cite{CL}.\\ \indent(b) Suppose $(X,~M_{d}, \ast)$ is the standard  fuzzy metric space in Example \ref{e1}. It is clear that $f$ is a fuzzy continuous function, respect to the fuzzy metric $M_{d}$. \\ \indent Since $|x-y|<1$ for all $x, y\in X$. Thus, given $\epsilon>0$ there exist $T_{0}>0$ such that $M_{d}(x, y, t)>1-\epsilon$ for all $x,~y\in X$ and $t>T_{0}$. So this is clear that for every $\epsilon-F$-pseudo orbit, $\{x_{i}\}_{i=0}^{\infty}$, $M_{d}(f^{i}(\frac{1}{2}), x_{i}, t)>1-\epsilon$ for all $i\geq 0$ and $t>T_{0}$. Therefore $(X, M_{d}, \ast, f)$ has the $F-$shadowing property.\\
 \indent(c) Now we show that $(X, M, \ast,  f)$ does not have the $F-$shadowing property, where $(X, M, \ast)$ is fuzzy metric space in Example \ref{e2}. At first we show \\ that $f$ is a fuzzy continuous function respect to fuzzy \\metric $M$. \\  Assume that $\epsilon$ and $t$ are arbitrary positive numbers and $\varphi:\mathbb{R}^{+}\rightarrow (0, 1]$ is the map in Example \ref{e2}.
 \begin{figure}\label{f1}
\begin{center}
\includegraphics[width=8 cm , height= 8cm]{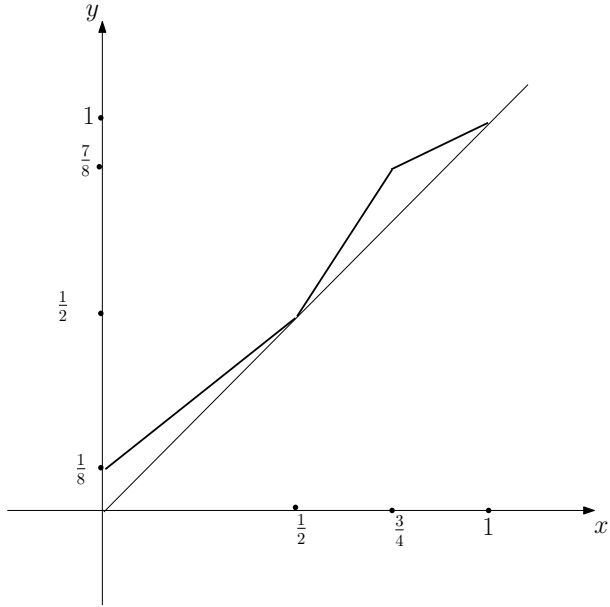}\\
\end{center}
\begin{center} \caption{The graph of the function $f$ from Example \ref{e7}}
\end{center}
\end{figure}
  By investigating various conditions of $x$ and $y$, we can prove that  $$\frac{min\{f(x), f(y)\}}{max\{f(x), f(y)\}}>\frac{1}{10}\frac{min\{x, y\}}{max\{x, y\}},$$
   for all $x, y\in X$. For example if $x,y\in[0,\frac{1}{2}]$ and $x<y$ then $f(x)=\frac{3}{4}x+\frac{1}{8}$ and $f(y)=\frac{3}{4}y+\frac{1}{8}$. So
   $\frac{min\{f(x), f(y)\}}=\frac{3}{4}x+\frac{1}{8}$ and ${max\{f(x), f(y)\}}=\frac{3}{4}y+\frac{1}{8}$
   and clearly $\frac{\frac{3}{4}x+\frac{1}{8}}{\frac{3}{4}y+\frac{1}{8}}>\frac{1}{10}\frac{x}{y}$.\\
  This implies that $M(f(x), f(y), t)>M(x, y, \frac{t}{10})$. So this is clear that if $M(x,y,\frac{t}{10})>1-\epsilon$ then  $M(f(x), f(y), t)>1-\epsilon$.
   This prove that $f$ is a fuzzy continuous function on $(X, M, \ast)$.
 \\
  Let $\epsilon=\frac{1}{5}$. Since $f(0)=\frac{1}{8}, f(\frac{1}{2}=\frac{1}{2} and f(\frac{3}{4}=\frac{7}{8}$, for any $\delta>0$ we can find the points $a, b\in X$ such that $a<\frac{1}{2}<b$, $\frac{min\{f(a), \frac{1}{2}\}}{max\{f(a), \frac{1}{2}\}}>1-\delta$ and $\frac{min\{f(\frac{1}{2}),  b\}}{max\{f(\frac{1}{2}), b\}}>1-\delta$. Up to this point, there is a $\delta-F-$pseudo orbit that contains  $\frac{1}{4}$ and $1$. Let $x\in X$, if $x\leq \frac{1}{2}$ then $M(f^{i}(x), \frac{1}{4}, t)<1-\frac{1}{5}$, for all $t>1 $ and for all $i\geq 0$. And, if $x<\frac{1}{2}$ then $M(f^{i}(x), 1, t)<1-\frac{1}{5}$, for all $t>1 $ and for all $i\geq 0$. This implies that $(X, M, \ast, f)$ does not have the $F-$shadowing property.
\\
 \indent(d) Suppose $(X, M_{1}, \ast)$ is the fuzzy metric space in Example \ref{e3}. Given $\epsilon>0$, since $f$ is continuous then there is $\delta>0$ such that if $|y-x|<\delta $ then $|f(y)-f(x)|<\frac{\epsilon}{8}$. Let $x<y$ and $\frac{x}{y}>1-\delta$. This implies that $y-x<\delta$, so $f(y)-f(x)<\frac{\epsilon}{8}$ and so \\$\frac{f(x)}{f(y)}>1-\frac{\epsilon}{8f(y)}>1-\epsilon$. Thus  $f$ is a fuzzy continuous function on $(X, M_{1}, \ast)$. By similar argument to the previous paragraph, one can prove that $( X, M_{1}, \ast, f)$ does not have the $F-$shadowing property.\end{exam}
  \begin{figure}\label{f9}
\begin{center}
\includegraphics[width=8 cm , height= 8cm]{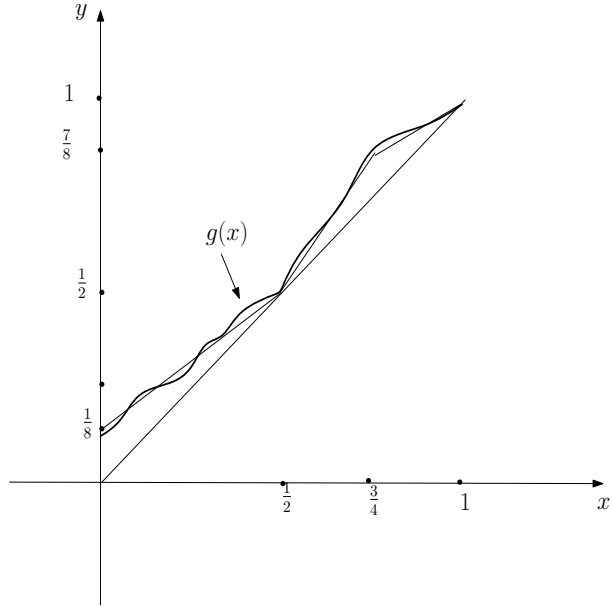}\\
\end{center}
\begin{center} \caption{The graph of the functions $f$ and $g$ from Example \ref{e7} that are near together.}
\end{center}
\end{figure}
It is a remarkable fact that, one can prove analogous results for any map $g$ sufficiently close to $f$.
\begin{exam}
Suppose $\alpha<\frac{1}{128}$ is a positive number and \\$g:[0,1]\rightarrow [0,1]$ be a continuous monotone map such that $g(x)>x$ if and only if $x\in[0,\frac{1}{2})\cup(\frac{1}{2},1)$,  $g(1)=1$, $g(\frac{1}{2})=\frac{1}{2}$ and \\$sup\{|f(x)-g(x)|: x\in [0,1]\}<\alpha$ (see Figure $2$ ). At first we show that $( X,M_{1}, \ast, g)$ is a fuzzy continuous function. Since $( X,M_{1}, \ast, g)$ is a fuzzy continuous function, this is sufficient to prove that $M_{1}(g(x),g(y),t)>\frac{1}{2} M_{1}(f(x),f(y),t)$ for all $x,y \in X$ and $t\in [0,1]$. \\Let $x, y \in [0,1]$ which $x<y$. It is clear that $f(x)f(y)>\alpha(f(x)+f(y))$. So $\frac{f(x)-\alpha}{f(y)+\alpha}>\frac{1}{2}\frac{f(x)}{f(y)}$. This implies that $\frac{g(x)}{g(y)}>\frac{f(x)-\alpha}{f(y)+\alpha}>\frac{1}{2}\frac{f(x)}{f(y)}$. Then $$\frac{min\{g(x),  g(y)\}}{max\{g(x),g(y)\}}=\frac{g(x)}{g(y)}>\frac{1}{2}\frac{f(x)}{f(y)}=\frac{1}{2}\frac{min\{f(x),  f(y)\}}{max\{f(x),f(y)\}}.$$So $M_{1}(g(x),g(y),t)>\frac{1}{2} M_{1}(f(x),f(y),t)$ for all $x,y \in X$ and $t\in [0,1]$.   \\Similar argument to $f$ in   Example \ref{e7} shows that the map $g$ does not have the shadowing property and $( X,M_{1}, \ast, g)$ does not have the $F-$shadowing property.
\end{exam}
\section{Conclusion}In this paper, we extend some notions and results known  in discrete dynamical systems  on fuzzy discrete dynamical systems and some related properties are investigated. In particular, the relation between various fuzzy shadowing property and fuzzy mixing is investigated. More precisely, the following implications are obtained:\\
$(a)$ $F$-ergodic shadowing property implies  $F-$chain transitivity.\\
 $(b)$ $F-$ergodic shadowing property implies $F-$shadowing property.\\
 $(c)$ $F$-shadowing and $F$-chain mixing properties imply $F-$topologically mixing property.\\
By definitions, most of the proofs were straight forward, but because of Example \ref{e7} we expect that a theorem similar to Theorem \ref{gha} in fuzzy discrete dynamical systems can be obtained for special fuzzy metric spaces. In particular, We are going to study and extend the other types of shadowing and their equivalence to fuzzy topology in another papers. Investigating the relation between chaos and stability with shadowing in fuzzy topology will become our future research topic. We finish this paper with the following  important questions: \\$1)$ For a fuzzy discrete dynamical system $f:X\longrightarrow X$, are the  $F-$ergodic shadowing and $F-$topological mixing properties equivalent?\\$2)$ Does the fuzzy discrete dynamical system $(X, M, \ast, f_{\beta})$ have the $F$-shadowing and $F$-chain mixing properties, when $f_{\beta}$ is the tent map and $(X, M_{1}, \ast)$ is the fuzzy metric space presented in  Example \ref{e2} or Example \ref{e3}?\\
\textbf{}


\bigskip
\bigskip




\begin{thebibliography}{99}
 \bibitem{B}  Bowen, R.: Equilibrium States and the Ergodic Theory of Anosov Aiffeomorphisms,  Springer-Verlag, New York 1975.

 \bibitem{Ka}  Canovas, J.S.,  Kupka, J.: Topological entropy of fuzzified dynamical systems. Fuzzy Sets and Systems, 165, 37-49  (2011).


 \bibitem{CL}  Chen, L.,  Li, S.: Shadowing property for inverse limit spaces, Trans. Amer. Math. Soc.,  115,  573-580 (1992).
\bibitem{CKY}  Coven, EM.,  Kan, I.,  Yorke, JA.: Pseudo-orbit shadowing in the family of tent maps, Trans. Amer. Math. Soc. 308,  227-241, (1988).

\bibitem{FG}  Fakhari, A.,  Helen Ghane, F.: On shadowing: ordinary and ergodic, J. Math. Anal. Appl.,  364, 151-155 (2010).

\bibitem{Na}  George, A.,  Veeramani, P.:  On some results of analysis for fuzzy metric spaces, Fuzzy Sets and Systems,  90, 365-368 (1997).
\bibitem{Nb} George, A.,  Veeramani, P.: On some results in fuzzy metric spaces, Fuzzy Sets and Systems, 64,  395-399 (1994).
\bibitem{MG} Grabiec, M., Fixed points in fuzzy metric spaces, Fuzzy Sets and Systems,  27,  385-389 (1998).

\bibitem{GMS}  Gregori, V.,  Morillas, S.,   Sapena, A.: Examples of fuzzy metrics and applications, Fuzzy Sets and Systems,  170,  95-111 (2011).

\bibitem{GR}  Gregori, V.,  Romaguera, S.: Some properties of fuzzy metric spaces, Fuzzy Sets and Systems, 115,  485-489 (2000).

\bibitem{R}  Gu, R.: The asymptotic average shadowing property and transitivity, Nonlinear Analysis,  67,  1680-1689 (2007).
\bibitem{K} Kloeden, P.: Fuzzy  dynamical systems. Fuzzy Sets and Systems, 7,  275-296 (1982).
\bibitem{KM} Kramosil, I.,  Michalek, J.: Fuzzy metrics and statistical metric spaces. Kybernetika (Prague), 11,  336-344 (1975).
\bibitem{Kc} Kupka, J.: On Devaney chaotic induced dynamical systems, Fuzzy Sets and Systems, 177,  34-44 (2011).
\bibitem{Kb} Kupka, J.: On fuzzifications of discrete dynamical systems, Inform Sciences,  181,  2858-2872 (2011).
\bibitem{M} Mihet, D.: A Banach contraction theorem in fuzzy metric spaces, Fuzzy Sets and Systems, 144,  431-439 (2004).

\bibitem{KP} Palmer, K.: Shadowing in Dynamical Systems: Theory and Applications, Dordrecht: Kluwer Acad. Publ. 2000.
\bibitem{RW}  Richeson, D.,  Wiseman, J.: Chain recurrence rates and topological entropy, Topology Appl.  156,  251-261 (2008).

\bibitem{RC}  Roman-Flores, H.,  Chalco-Cano, Y.: Some chaotic properties of Zadeh's extensions. Chaos Solitons  Fractals,  35, 452-459 (2008).
\bibitem{KS}Sakai, K.:  Various shadowing properties for positively expansive maps, Topology.  Appl.,  131, 15-31 (2003).
\bibitem{S} Shostak, AP.:  Two decades of fuzzy topology: basic ideas, notions, and results. Russian Math. Surveys, 44,  125-186 (1989).
\bibitem{W} Walters, P.: On the pseudo-orbit tracing property and its relationship to stability, Lecture Notes in Math., Springer-Verlag, New York 1978.
\bibitem{YA} Yang RS.: The pseudo-orbit tracing property and chaos. Acta Math. Sinica. 1996; 39:  382-386.
\bibitem{YB} Yang RS.: Pseudo-orbit tracing property and completely positive entropy. Acta Math. Sinica, 42, 99-104 (1999).
\bibitem{Z}Zadeh, L.:  Fuzzy sets,  Inform. and Comput.  8, 338-353 (1965).
\end{thebibliography}
\end{document}